\theoremstyle{plain}
\newtheorem{theorem}{Theorem}
\newtheorem{lemma}[theorem]{Lemma}
\theoremstyle{definition}
\newtheorem{conjecture}[theorem]{Conjecture}
\theoremstyle{remark}
\def\nicebreak{\vskip0pt plus50pt\penalty-300\vskip0pt plus-50pt }
\newcommand{\cyc}{\varPhi}
\newcommand{\llbrace}{\{\mkern-3mu\{}
\newcommand{\rrbrace}{\}\mkern-3mu\}}
\newcommand{\calM}{\mathcal{M}}
\newcommand{\calA}{\mathcal{A}}
\newcommand{\calR}{\mathcal{R}}
\newcommand{\calL}{\mathcal{L}}
\newcommand{\abs}[1]{\mathopen|#1\mathclose|}
\newcommand{\Sd}{\varSigma_d}
\newcommand{\redto}{\rightarrowtail}
\renewcommand{\dfrac}[2]{\lower0.12ex\hbox{\large$\textstyle\frac{#1}{#2}$}}
\newcommand{\seqnum}[1]{\href{https://oeis.org/#1}{\rm \underline{#1}}}
\title{Paths through Equally Spaced Points on a Circle}
\author{Brendan D. McKay\thanks{This project employed resources
  from the National Computational Infrastructure of Australia.}\\
School of Computing\\
Australian National University\\
Canberra, ACT 2601, Australia\\
\texttt{brendan.mckay@anu.edu.au}
\and
Tim Peters\\
Python Software Foundation\\
USA\\
\texttt{tim@python.org}
}
\date{} 
\begin{document}
\maketitle

\begin{abstract}
Consider $n$ points evenly spaced on a circle, and a
path of $n-1$ chords that uses each point once.
There are $m=\lfloor n/2\rfloor$ possible chord lengths, so
the path defines a multiset of $n-1$ elements drawn
from $\{1,2,\ldots,m\}$.
The first problem we consider is to characterize the multisets which are
realized by some path.
Buratti conjectured that all multisets can be realized when~$n$
is prime, and a generalized conjecture for all~$n$ was
proposed by Horak and Rosa.  Previously the conjecture was proved
for $n\leq 19$ and $n=23$; we extend this to $n\leq 37$
(OEIS sequence A352568).

The second problem is to determine the number of distinct
(euclidean) path lengths that can be realized.  For this
there is no conjecture; we extend current knowledge from
$n\leq16$ to $n\leq37$ (OEIS sequence A030077).
When $n$ is prime, twice a prime, or a power of~2, we prove
that two paths have the same length only if
they have the same multiset of chord lengths.
\end{abstract}

\section{Introduction}

Consider $n$ points equally spaced around a circle.
There are $m=\lfloor n/2\rfloor$ possible chord lengths.
The \textit{type} of a chord is its position in the list
of chord lengths in increasing order; thus
a chord of type~1 is between two adjacent points and
a chord of type~$m$ is between two points as antipodal
as possible.
If the points are numbered cyclically, the type of the chord
between points~$i$ and~$j$ is $\min\{\abs{i-j},n-\abs{i-j}\}$.

Now connect the points by a polygonal path using each point exactly once.
The \textit{associated multiset} of the path is the multiset of the types of the
chords.
We consider two questions:

\nicebreak
\noindent
(Q1) Which multisets are the associated multiset of some path?\\[1ex]
(Q2) How many distinct (euclidean) lengths can paths have?

We denote a multiset by the notation
$[\ell_1,\ldots,\ell_m]$, where $\ell_j$ is the number of elements
equal to~$j$.
Figure~\ref{circle} shows the associated multiset of a path
in this notation.

\begin{figure}[t]
\[  \includegraphics[scale=1]{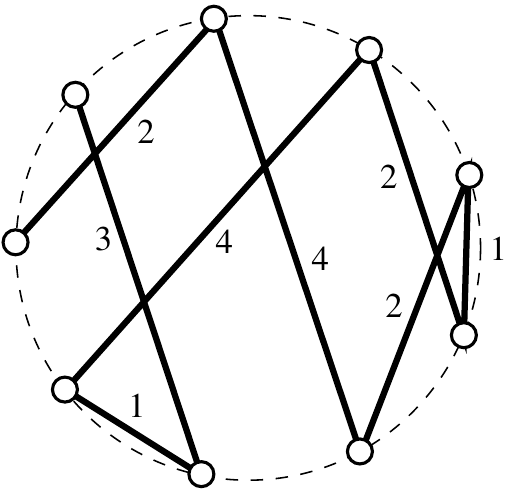} \]
   \caption{A path for $n=9$ with associated multiset $[2,3,1,2]$.\label{circle}}
\end{figure}

Three classes of multisets are relevant to this study.
\begin{itemize}\itemsep=0pt
  \item[(a)] $\calM_n$ is the class of all multisets
     $[\ell_1,\ldots,\ell_m]$ such that $m=\lfloor n/2\rfloor$ and
     $\sum_{j=1}^m \ell_j=n-1$.
  \item[(b)] The \textit{admissible} multisets are the class
     $\calA_n\subseteq \calM_n$ of multisets with this additional
     property:  for each divisor $d$ of $n$,
     $\sum_{j=1}^{\lfloor m/d\rfloor} \ell_{jd} \leq n-d$.
  \item[(c)] The \textit{realizable} multisets are the class
    $\calR_n\subseteq \calM_n$ of multisets associated
    with some path.
\end{itemize}

In 2007, Marco Buratti communicated to Alex Rosa the conjecture
that $\calR_n=\calM_n$ if $n$ is prime~\cite{HorakRosa}.
Despite its simple statement, the conjecture remains open,
though Mariusz Meszka confirmed it by computer for $n\leq 23$~\cite{Meszka}.
It is easy to see that the primality of $n$ is essential for
$\calR_n=\calM_n$, however Horak and Rosa proposed a more
general conjecture that has drawn a lot of attention~\cite{HorakRosa}.

\begin{conjecture}[Buratti--Horak--Rosa]\label{BHR}
 $\calR_n=\calA_n$ for $n\geq 1$.
\end{conjecture}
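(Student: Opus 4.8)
The plan is to treat the two inclusions of Conjecture~\ref{BHR} separately. The inclusion $\calR_n\subseteq\calA_n$ can be established unconditionally for every $n$; the reverse inclusion $\calA_n\subseteq\calR_n$ is the open part, and the realistic goal is to certify it for all $n$ in a bounded range (here $n\le 37$) by explicit construction together with exhaustive search.

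For $\calR_n\subseteq\calA_n$, fix a path realizing $[\ell_1,\dots,\ell_m]$ and a divisor $d$ of $n$, and put $k=n/d$. Because $d\mid n$, a chord joining points $i$ and $i+t$ has type a multiple of $d$ exactly when $d\mid t$, i.e.\ exactly when its two endpoints lie in the same residue class modulo~$d$; there are $d$ such classes, each of size $k$. The chords of the path whose type is a multiple of $d$ thus form a subgraph that, restricted to any one residue class, is a disjoint union of subpaths and so has at most $k-1$ edges there, hence at most $d(k-1)=n-d$ edges altogether. That total equals $\sum_{j=1}^{\lfloor m/d\rfloor}\ell_{jd}$, which is therefore $\le n-d$; so every realizable multiset is admissible.

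For the reverse inclusion with $n\le 37$, the approach is computational. One enumerates $\calA_n$ directly: list the nonnegative integer vectors $(\ell_1,\dots,\ell_m)$ with $\sum_j\ell_j=n-1$ and discard those failing some divisor inequality. Multisets lying in families for which explicit realizations are known (for instance those with $\ell_1$ large relative to $n$, or with few distinct types) are certified by construction and set aside; this removes the bulk of $\calA_n$ and is what makes $n=37$ feasible. Each remaining multiset is handed to a depth-first search that grows a path one chord at a time, decrementing the budget $\ell_j$ for the type just used and backtracking as soon as a budget would go negative or the unused points cannot be completed. Useful pruning comes from re-imposing the admissibility inequalities on the residual subproblem and from a connectivity test on the unused points under the residual budget; fixing the first chord exploits the dihedral symmetry of the circle and divides the work by a factor of order $n$. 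The resulting counts $\abs{\calR_n}=\abs{\calA_n}$ are recorded as OEIS sequence~\seqnum{A352568}.

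The genuine obstacle is $\calA_n\subseteq\calR_n$ for unbounded $n$. No construction is known that covers all admissible multisets, and there is no reduction of the case $n$ to smaller cases: inserting a point into the circle rescales many chord types simultaneously, so the obvious inductions on $n$ collapse. Even the prime case (Buratti's original conjecture, where $\calA_n=\calM_n$) remains open. I therefore do not expect a uniform proof here; the deliverable is the unconditional half $\calR_n\subseteq\calA_n$ together with the exact verification of $\calR_n=\calA_n$ for $n\le 37$.
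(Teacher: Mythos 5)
You have correctly identified what can and cannot be proved here: the statement is a conjecture, the paper does not prove it in general, and its actual content (Theorem~\ref{mainthm}) is the verification for $n\le 37$ plus the known unconditional inclusion $\calR_n\subseteq\calA_n$. Your proof of that inclusion (chords of type divisible by $d$ stay within residue classes mod $d$, and a subgraph of a Hamiltonian path restricted to a $k$-point class is a union of subpaths with at most $k-1$ edges, giving $d(k-1)=n-d$ in total) is correct and is essentially the argument the paper attributes to Horak--Rosa and Pasotti--Pellegrini rather than reproving. Where you diverge is the computational half, and one of your feasibility claims is directly contradicted by the paper: you propose to set aside the multisets covered by known constructive families and assert this ``removes the bulk of $\calA_n$,'' whereas the paper states explicitly that the proved special cases are only a small fraction of cases for large $n$ and were deliberately not excluded. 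The mechanisms that actually make $n=37$ (about $6.4\times 10^{13}$ admissible multisets, four CPU-years) tractable in the paper are different: the equivalence $M\mapsto kM$ for $k$ coprime to $n$, which lets one representative per orbit suffice; a randomized hill-climbing search over path transformations that warm-starts each multiset from the solution of its lexicographic predecessor (averaging about $10^2$ iterations per multiset); and, in the second implementation, backtracking enhanced with Limited Discrepancy Search. Your plain per-multiset depth-first search with admissibility-based pruning and a dihedral symmetry reduction of order $n$ would be correct in principle but, without the coprime reduction and warm-starting, there is no evidence it would terminate in reasonable time at the upper end of the range, and the one work-saving device you do lean on is the one the paper says does not help much.
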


Horak and Rosa noted that $\calR_n\subseteq\calA_n$; for a 
self-contained proof see Pasotti and Pellegrini~\cite{PasottiPellegrini1}.
Meszka confirmed the conjecture for $n\leq 18$~\cite{Meszka}.
In addition, Conjecture~\ref{BHR} has been proved for a considerable
number of special cases~\cite{Capparelli,Chand,Ollis,OllisGrow,
PasottiPellegrini1,PasottiPellegrini2,Vazquez}.
We will prove:

\begin{theorem}\label{mainthm}
  The Buratti--Horak--Rosa conjecture is true for $n\leq 37$.
\end{theorem}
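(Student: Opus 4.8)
The plan is this. Since the inclusion $\calR_n\subseteq\calA_n$ is already known, it suffices to prove the reverse inclusion $\calA_n\subseteq\calR_n$ for each $n\le 37$; the cases $n\le 19$ and $n=23$ are covered by prior work, so the genuinely new content is $n\in\{20,21,22\}\cup\{24,\dots,37\}$. It is convenient to fix a path up to rotation: a path is a sequence $0=s_0,s_1,\dots,s_{n-1}$ of distinct elements of $\mathbb{Z}_n$, and, writing $d_i=s_i-s_{i-1}$, the associated multiset records for each $j\le m$ the number of indices $i$ with $\min(d_i,n-d_i)=j$. Realizing a target $[\ell_1,\dots,\ell_m]$ thus means choosing, for each type $j$, how many of its $\ell_j$ occurrences use the step $+j$ versus $-j$ (that is, $n-j$), together with an ordering of all $n-1$ steps, so that the partial sums $s_0,\dots,s_{n-1}$ are pairwise distinct. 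The first point to register is that for prime $n$ the admissibility conditions are vacuous, so $\calA_n=\calM_n$; hence for the primes $n\le 37$, in particular $n=37$, one must realize every multiset in $\calM_n$, and since $|\calM_{37}|=\binom{53}{17}$ is far too large to examine the multisets one at a time, the heart of the argument must be a reduction to a small, computer-checkable family.

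So the main step is to build a toolkit of construction lemmas that assemble a realization of a large multiset out of realizations of smaller pieces. The natural ingredients are: (i) concatenation — realize two arcs on complementary subsets of $\mathbb{Z}_n$ with a connecting chord of a prescribed type, then glue; (ii) local splicing moves — insert a short back-and-forth detour, or transpose two consecutive steps, to shift multiplicity between nearby coordinates of the multiset; (iii) the free symmetries of translation, reflection $s_i\mapsto c-s_i$, and path reversal, all of which preserve the associated multiset. The goal is to show, by induction on a suitable complexity measure (for instance $\sum_j j\,\ell_j$, or the number of distinct types appearing, or the distance of $[\ell_1,\dots,\ell_m]$ from a canonical \emph{mostly type~$1$} multiset), that realizability propagates from a finite set of \emph{seed} multisets to all of $\calA_n$. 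If the toolkit is strong enough, this collapses the $\binom{53}{17}$ candidates for $n=37$ down to a list of irreducible seeds of manageable size.

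The seeds are then dispatched by exhaustive backtracking on the sequence $s_0,\dots,s_{n-1}$: fix $s_0=0$, branch over which step to append next, enforce distinctness of the partial sums incrementally, prune using necessary conditions on partial paths (reachability of the still-unvisited vertices, counting arguments on the steps not yet used), mod out by the residual reflection and path-reversal symmetries, and order the branching (largest types first, most-constrained decisions first) so that dead ends surface early; a fast randomized greedy pass can be run first so that only the genuinely stubborn seeds ever reach the full search. For every $n\le 37$ this confirms that each seed, hence — by the lemmas — every admissible multiset, is realizable. Recording $|\calR_n|=|\calA_n|$ and checking the values against the published results for $n\le 23$ and against \seqnum{A352568} provides an independent sanity check and completes the proof.

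The hard part will be the tension between the two halves. The reduction lemmas must be simultaneously (a) rigorously proved, with no heuristic gaps, (b) jointly exhaustive, so that the reducible multisets together with the explicitly listed seeds provably cover all of $\calA_n$ — an error-prone piece of bookkeeping — and (c) powerful enough to leave only a tractable residue. Meanwhile the residual search is worst-case exponential, and the multisets most resistant to the lemmas are precisely the awkward ones: those concentrated on a single large type, or sitting right on the boundary of admissibility where one of the divisor inequalities is tight. Designing a reduction scheme whose leftover cases are at once few in number and genuinely searchable — so that brute force actually terminates for $n$ up to $37$ — is the crux of the whole enterprise.
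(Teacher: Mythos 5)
Your proposal has a genuine gap at its center: the ``toolkit of construction lemmas'' that would provably reduce all of $\calA_n$ to a small list of seeds is hypothesized but never constructed, and you yourself flag its existence as ``the crux of the whole enterprise.'' That crux is exactly the unsolved part of the problem. The literature does contain lemmas of the kind you describe (concatenation, splicing, growable realizations), but as the paper notes they cover only a small fraction of the admissible multisets for large $n$, and no known scheme comes with a proof that the irreducible residue is finite and small. A reduction scheme with the properties you require --- rigorous, jointly exhaustive over $\calA_n$, and leaving a tractable seed set --- would in effect be a proof strategy for the full Buratti--Horak--Rosa conjecture, not just for $n\leq 37$. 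As written, your argument proves nothing until those lemmas are stated and verified.

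Moreover, the premise that forces you into this architecture is false. You assert that $\abs{\calM_{37}}=\binom{53}{17}\approx 3.2\times 10^{13}$ is ``far too large to examine the multisets one at a time,'' but that is precisely what the paper does: it tests every admissible multiset for $n\leq 37$ (about $6.4\times 10^{13}$ in total), using only the symmetry $M\mapsto kM=\llbrace k\ell \bmod n \mid \ell\in M\rrbrace$ for $\gcd(k,n)=1$ (plus one path transformation) to shrink the workload. Each multiset is realized either by randomized hill-climbing over local path modifications --- processing multisets in lexicographic order so each solution seeds the next, averaging about $104$ moves per multiset at $n=34$ --- or by a heuristic backtracking search accelerated by Limited Discrepancy Search; every realization found is re-checked by separate code, the two authors worked with independent implementations, and the whole computation cost roughly four years of CPU time. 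So the correct criticism of your plan is not that the brute-force alternative is impossible, but that you replaced a feasible (if expensive) direct computation with an unproved structural reduction. Your backtracking machinery for the seeds and your cross-check against \seqnum{A352568} are sensible and close to what the paper actually does, but only once the search is run over all of $\calA_n$ rather than over a seed set that has not been shown to exist.
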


For question Q2, the first investigation we are aware of was carried out
in the mid-1980s by Daniel Gittelson, then at the University of Michigan
School of Medicine.  Gittelson found the counts up to 12~points~\cite{DLG}.
T.\,E.~Noe added the counts up to 16~points in 2007~\cite{Noe}.
We will continue the sequence up to $n=37$.

\nicebreak
\section{Realization of multisets}

\begin{figure}[ht]
\[  \includegraphics[scale=0.8]{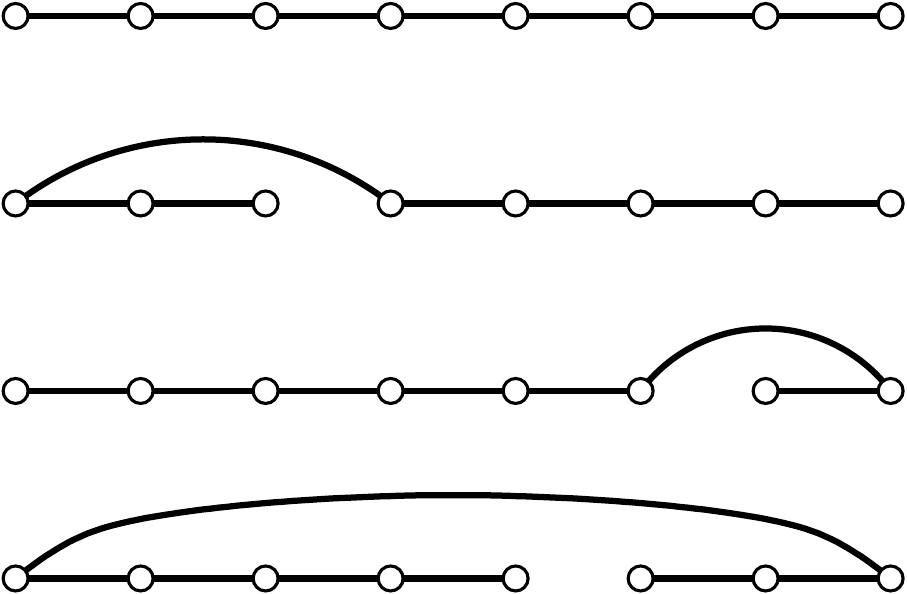} \]
\caption{A path and three types of modification\label{pathflip}}
\end{figure}

Our most computationally challenging task was to find paths that
realize each of approximately $6.4\times 10^{13}$ admissible multisets.
For this
a simple backtrack search is by far not efficient enough for large~$n$,
so we designed several improved algorithms.
Here we describe the two most successful.
Note that, although many special cases of Conjecture~\ref{BHR} have
been proved, they are only a small fraction of cases for large~$n$,
so we chose to not exclude them from our search.

One observation used by both methods is this: if $k$ is an 
integer coprime to $n$, then $kM$ is realizable if and only
if $M$ is realizable, where
$kM = \llbrace k\ell \bmod n \mid \ell \in M\rrbrace$.
Thus, only one of the multisets in each equivalence class
defined by this congruence need be tested.

One approach was a randomized form of hill-climbing.
Figure~\ref{pathflip} shows three ways to transform a path,
which were employed for theoretical purposes by Horak and Rosa~\cite{HorakRosa}.
In each case, the induced multiset loses one element and
gains another (perhaps equal).
The idea is to start with some path and then repeatedly apply
transformations until the required multiset is achieved.

Choice of transformation was made at random with a strong
bias towards beneficial moves.  Transformations which moved
away from the target (fewer chords matched the required
multiset) were given a weight of 1, sideways transformations
(same number of matches) a weight of 100, and transformations
that moved closer to the target had a weight of 10000 (or $\infty$ if the
target multiset was immediately reached).  The admissible
multisets were processed in lexicographic order, meaning that
each multiset was usually very similar to the one before.
This meant it was efficient to use the solution for each 
multiset as the starting point to search for a solution for the
following multiset.

There was a large limit on the number of iterations, with
code to start over with a random path if the limit was reached,
but this never happened.
As an example, for $n=34$ the average number of iterations
was~$104$.

The second method for realizing multisets was a mixture of
random and deterministic search.
A  boolean array indexed by a  multiset ranking function
kept track of which multisets had been realized, while simultaneously
one process generated random paths and another realized
multisets using a backtracking search. In both cases, multisets
related by coprime multiplication (as described above) and by the last
operation in Figure~\ref{pathflip} were also marked off. 
The backtracking search had some problem-specific features
that we now describe.

At each recursion level, we have a path so far, and a multiset of
chord types that still need to be used. For each distinct chord
type remaining, there can be 0, 1 or 2 unused points that can
reached by such a chord.  The order in which the possibilities
are attempted is important for the average efficiency.
When all possibilities are exhausted, backtrack to the previous
level occurs.

Heuristics are used to try to guess at a good order in which to try
chord types. In general, the program favors a pair of chord type and
next point that leaves the next point with the fewest number of possible
exits,
and also favors chord types of which the fewest remain to be used.
This all has much in common with the usual heuristics in backtracking
Hamiltonian path solvers, including various conditions that allow to
prune a search ``early''.

There are also some specializations, driven by experience. For
example, if $n$ is even, and only one instance of an odd chord type
remains, there is only one possible place that chord can appear in the
remaining path.

This usually worked very well, but in a small percentage of cases
would take hundreds of times longer. A pleasant surprise was that
Limited Discrepancy Search (LDS)~\cite{LDS}, adapted for non-binary
trees, proved extremely effective, 99.9\%\ of the time finding a
path with discrepancy no larger than 1, and with
discrepancy 2 in 99\%\ of the remaining cases.
However, particularly for the largest size $n=28$ completed by this
method, a handful of cases required discrepancies as high as 14
and took minutes of cpu time each.

For both implementations, whenever a realization is found it is
checked in separate code. 
The result of the computations was that all admissible multisets
for $n\leq 37$ are realizable.
All cases for $n\leq 28$ were completed with both methods.

\section{When two paths have the same length}\label{identities}

For definiteness we will assume a circle of radius~1.
The length of a chord of type~$j$ is $2\sin(j\pi/n)$.
Therefore, realizable multisets $[\ell_1,\ldots,\ell_m]$
and $[\ell'_1,\ldots,\ell'_m]$ have the same length if and
only if $\sum_{j=1}^m (\ell'_j-\ell_j) \sin(j\pi/n)=0$.
Also note that $\sum_{j=1}^m (\ell'_j-\ell_j)=0$, since
all multisets in $\calM_n$ have $n-1$ elements.

We will call a sequence $(a_1,\ldots,a_m)$ of rational
numbers an \textit{identity} if
\begin{align}
       \sum_{j=1}^m a_j\sin\Bigl(\frac {j\pi}{n}\Bigr) &= 0, \
            \text{~~and} \label{eq1} \\[-0.8ex]
       \sum_{j=1}^m a_j &= 0. \label{eq0}
\end{align}
Let $z=e^{i\pi/n}$, which is a primitive $(2n)$-th root of~1.
Then $\sin\bigl( \frac  {j\pi}{n}\bigr) = \dfrac{1}{2i}(z^j-z^{-j})$.
Thus~\eqref{eq1} can be written
\[
    \frac{1}{2i} \sum_{j=1}^m a_j (z^j-z^{-j}) = 0.
\]
Since $z\ne 0$, this is equivalent to $P_n(z)=0$, where
\begin{equation}\label{eq3}
    P_n(z) = z^m \sum_{j=1}^m a_j (z^j-z^{-j})
     = \sum_{j=1}^{m} a_j z^{m+j} - \sum_{j=1}^m a_j z^{m-j}.
\end{equation}
Note that $P_n(z)$ is a polynomial with rational coefficients.

The \textit{cyclotomic polynomial} of order $2n$ is the
monic polynomial $\cyc_{2n}(x)$ whose zeros are the primitive
$(2n)$-th roots of unity.  In particular, $\cyc_{2n}(z)=0$.
For the theory of cyclotomic polynomials, see Prasolov~\cite[pp.\,89--99]{Prasolov}.
We will require these properties: (1) up to scaling, $\cyc_{2n}(z)$ is the
unique nonzero rational polynomial of least degree that has~$z$ as a zero;
(2) the degree of $\cyc_{2n}(x)$ is Euler's totient function $\varphi(2n)$
(the number of positive integers less than $2n$ and coprime to $2n$);
(3) $\cyc_{2n}(x)$ is palindromic (the list of coefficients reads the same
forwards and backwards).

Perform a rational polynomial division:
\[
      P_n(x) = C_n(x)\cyc_{2n}(x) + R_n(x),
\]
where $C_n(x)$ is a rational polynomial and $R_n(x)$ has lower degree than
$\cyc_{2n}(x)$.
Since $R_n(z)=0$, the minimality of $\cyc_{2n}(x)$ implies
 that $R_n(x)$ is identically zero.

The coefficients of $R_n(x)$ are linear combinations of $a_1,\ldots,a_m$
which must equal~0.
Including equation~\eqref{eq0}, we have a linear system whose solution
space is the vector space of all identities.

\nicebreak
\subsection{Example}

Consider $n=15$, $m=7$.
The cyclotomic polynomial is
\[
    \cyc_{30}(x) = x^8 + x^7 - x^5 - x^4 - x^3 + x + 1.
\]
Performing the division, we find $P_{15}(x) = C_{15}(x) \cyc_{30}(x) + R_{15}(x)$,
where
\begin{align*}
  C_{15}(x) &= a_7x^6 + (a_6-a_7)x^5 + (a_5-a_6+a_7)x^4 
     + (a_4-a_5+a_6)x^3  \\
     &{\qquad} + (a_3-a_4+a_5)x^2
     + (a_2-a_3+a_4+a_7)x + a_1-a_2+a_3+a_6-a_7, \\
  R_{15}(x) &= (-a_1+a_2+a_5-a_6+a_7)x^7 + (-a_1+a_2+a_4+a_7) x^6
    + (a_1-a_2+a_3+a_6) x^5 \\
    &{\qquad} + (a_1-a_3+a_6-a_7) x^4 + (a_1-a_2-a_4-a_7) x^3
    + (-a_2-2a_5-a_7)x^2 \\
    &{\qquad} + (-a_1-a_4-2a_6) x - a_1+a_2-a_3-a_6.
\end{align*}
Now we require $R_{15}(x)=0$ identically, so we can set each of
the coefficients to 0 and we also need $a_1+a_2+a_3+a_4+a_5+a_6+a_7=0$.
In matrix form:

\[
\begin{bmatrix}
-1 & 1 & -1 & 0 & 0 & -1 & 0 
\\
 -1 & 0 & 0 & -1 & 0 & -2 & 0 
\\
 0 & -1 & 0 & 0 & -2 & 0 & -1 
\\
 1 & -1 & 0 & -1 & 0 & 0 & -1 
\\
 1 & 0 & -1 & 0 & 0 & 1 & -1 
\\
 1 & -1 & 1 & 0 & 0 & 1 & 0 
\\
 -1 & 1 & 0 & 1 & 0 & 0 & 1 
\\
 -1 & 1 & 0 & 0 & 1 & -1 & 1 
\\
 1 & 1 & 1 & 1 & 1 & 1 & 1 
\end{bmatrix}\;
\begin{bmatrix}
  a_1\\a_2\\a_3\\a_4\\a_4\\a_6\\a_7
\end{bmatrix}
=
\begin{bmatrix}
 0\\0\\0\\0\\0\\0\\0\\0\\0
\end{bmatrix}
.
\]
The solution space has dimension 2:
\[
    \bigl\langle
          (1, 0, -1, -1, -1, 0, 2), (0, 1,  0, -2, -1, 1, 1)
    \bigr\rangle.
\]

\subsection{What is the dimension?}

We now determine the dimension of the vector space of identities.
For those values of~$n$ where the dimension is~0, only
paths with the same multiset of chord types have the same length.

 \begin{theorem}\label{dimen0}
     For all $n\geq 1$, the dimension of the vector space of identities is
   \[
        \max\bigl\{ 0, m - \tfrac12\varphi(2n) - 1\bigr\}.
   \]
   In particular, the dimension is~0 if
  and only if $n=9$, or $n$ is a prime, twice a prime, or a power of~2.
\end{theorem}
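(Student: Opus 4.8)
The plan is to understand the linear system $R_n(x)=0$ together with~\eqref{eq0} well enough to compute its rank, and to do this I will work directly with the polynomial identity $P_n(x) = C_n(x)\cyc_{2n}(x)$ rather than with the matrix of coefficients. The key observation is that $P_n(x)$, as defined in~\eqref{eq3}, is an antipalindromic polynomial of degree $2m$ with a zero at $x=0$ of order $m-j_{\min}$; more importantly, the condition that $\cyc_{2n}(x)$ divides $P_n(x)$ is, because $\cyc_{2n}$ is the minimal polynomial of $z$ over $\mathbb{Q}$, equivalent to $P_n(x)$ lying in the ideal $(\cyc_{2n}(x))$. So first I would reinterpret an identity: a rational tuple $(a_1,\dots,a_m)$ with $\sum a_j=0$ is an identity precisely when the polynomial $\sum_{j=1}^m a_j(x^{m+j}-x^{m-j})$ is divisible by $\cyc_{2n}(x)$. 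Dividing through by $x^{m}$ (harmless since $\cyc_{2n}(0)\ne 0$), this says $Q(x) := \sum_{j=1}^m a_j(x^{j}-x^{-j})$, viewed as a Laurent polynomial, vanishes at every primitive $(2n)$-th root of unity.

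Next I would count. The Laurent polynomials of the form $\sum_{j=1}^m a_j(x^j - x^{-j})$ form an $m$-dimensional $\mathbb{Q}$-vector space $V$ (one basis vector $x^j-x^{-j}$ for each $j=1,\dots,m$), and the extra constraint $\sum a_j=0$ cuts out an $(m-1)$-dimensional subspace $V_0$ — this is exactly the requirement that $x=1$ be a root, but note $x^j-x^{-j}$ already vanishes at $x=1$ automatically... so I need to be careful: the constraint $\sum a_j = 0$ is actually the condition that $Q(x)/(x-x^{-1})$, or equivalently $P_n(x)/(x^2-1)\cdot x^{\text{something}}$, behaves correctly — in fact $\sum a_j=0$ is the statement that $x = -1$ is... let me instead track it honestly: evaluating $\frac{d}{dx}$ or using $\sin$ directly, $\sum a_j = 0$ corresponds to the vanishing of the derivative-type quantity, or simply it is the independent linear equation~\eqref{eq0} the paper already isolated. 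The cleanest route: the space of identities is the set of $(a_1,\dots,a_m)$ such that the degree-$2m$ polynomial $P_n(x)$ is divisible by $(x-1)\cdot\text{(nothing)}$... I would instead just say the identity space is the intersection of the kernel of "reduction mod $\cyc_{2n}$" with the hyperplane $\sum a_j=0$, and compute $\dim = m - \operatorname{rank}$.

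So the heart of the argument is: the map $T\colon \mathbb{Q}^m \to \mathbb{Q}[x]/(\cyc_{2n}(x))$ sending $(a_j)$ to $P_n(x)\bmod \cyc_{2n}(x)$ has image whose dimension I must pin down as $\min\{m,\ \tfrac12\varphi(2n)\}$, after which, since generically the hyperplane $\sum a_j = 0$ drops the kernel dimension by exactly $1$ (unless the kernel already lies in that hyperplane, which happens precisely when $m \le \tfrac12\varphi(2n)$ forces the identity space to be $0$ and the $\max$ clamps it), the formula $\max\{0,\ m - \tfrac12\varphi(2n) - 1\}$ follows. To get $\dim\operatorname{im} T = \min\{m, \tfrac12\varphi(2n)\}$ I would exploit the antipalindromic symmetry of $P_n$ and the palindromic symmetry of $\cyc_{2n}$: complex conjugation $x\mapsto x^{-1}$ acts on both, the residues $z^j$ for $j$ coprime to $2n$ pair up as $z^j, z^{-j}$, and the functions $\sin(j\pi/n)$ for $j$ running over a set of size $\tfrac12\varphi(2n)$ (one per conjugate pair) are $\mathbb{Q}$-linearly independent; this independence, essentially the fact that the $\mathbb{Q}$-span of $\{\sin(j\pi/n) : 1\le j\le m\}$ has dimension exactly $\tfrac12\varphi(2n)$ (a classical consequence of the irreducibility of $\cyc_{2n}$ and the Galois action), gives simultaneously the upper bound on the identity space and, by dimension count, shows $T$ is surjective when $m \ge \tfrac12\varphi(2n)$.

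Finally, for the ``in particular'' clause, I would solve $m = \lfloor n/2\rfloor \le \tfrac12\varphi(2n)+1$, i.e. $\varphi(2n) \ge 2\lfloor n/2\rfloor - 2$. Splitting into $n$ even and $n$ odd and using $\varphi(2n)=\varphi(n)$ for $n$ odd, $\varphi(2n)=2\varphi(n)$ or $\varphi(2n)=\varphi(n)$ appropriately for $n$ even (more precisely $\varphi(2n)=\varphi(2)\varphi(n)=\varphi(n)$ when $n$ odd and $\varphi(2^{k+1}m')=2^{k}\varphi(m')\cdot(\ldots)$), the inequality $\varphi(n) \ge n - 3$ or $n-2$ forces $n$ to be very small or highly structured; a short case analysis on the prime factorization (at most two distinct odd primes, bounding exponents) pins the solutions down to $n$ prime, $n=2p$, $n=2^k$, and the sporadic $n=9$, with all other candidates eliminated by direct check. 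The main obstacle I anticipate is the clean bookkeeping in the rank computation — in particular verifying that the hyperplane $\sum a_j=0$ genuinely reduces the dimension by exactly one in the regime $m > \tfrac12\varphi(2n)$, which amounts to checking that the all-ones-type functional is not identically zero on $\ker T$; I expect this to follow from exhibiting one explicit identity with $\sum a_j \ne 0$ being impossible — rather, from noting that $\ker T$ contains elements not in the hyperplane, e.g. because $P_n$ restricted to a suitable one-parameter family stays divisible by $\cyc_{2n}$ while $\sum a_j$ varies. The number-theoretic endgame is routine but fiddly and is where I would be most careful to not miss the exceptional value $n=9$.
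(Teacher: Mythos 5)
Your overall strategy for the dimension count is essentially the paper's, in different clothing: you compute the rank of the ``reduce $P_n$ mod $\cyc_{2n}$'' map using the antipalindromic/palindromic symmetry and the $\mathbb{Q}$-linear independence of the relevant sines, where the paper equivalently parametrizes the quotient $C_n(x)$ by a basis of $t$-antipalindromic polynomials with $t=2m-\varphi(2n)$; both give that the solution space of \eqref{eq1} alone has dimension $\max\{0,\,m-\tfrac12\varphi(2n)\}$. That half is fine (though your justification of the rank claim is only gestured at, it is a standard fact about the maximal real subfield of $\mathbb{Q}(e^{i\pi/n})$).

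The genuine gap is the step you yourself flag as ``the main obstacle'' and then do not carry out: showing that whenever $m-\tfrac12\varphi(2n)\geq 1$ the kernel of the sine map is \emph{not} contained in the hyperplane $\sum_j a_j=0$, so that \eqref{eq0} cuts the dimension down by exactly one. Saying this holds ``generically,'' or that it should follow from ``a suitable one-parameter family'' along which $\sum a_j$ varies, is circular --- the existence of such a family is precisely the assertion to be proved, and a priori the answer could be $\max\{0,\,m-\tfrac12\varphi(2n)\}$ for some $n$. The paper spends the bulk of its proof here: it observes that an ``improper identity'' (a solution of \eqref{eq1} violating \eqref{eq0}) for a divisor $d$ of $n$ lifts to one for $n$ by spreading the entries onto positions divisible by $n/d$, reduces to the three minimal cases $n=2p$, $n=p^2$, $n=pq$ ($p,q$ odd primes), and exhibits an explicit $C_n(x)$ in each case (e.g.\ $C_n(x)=x^2-1$ for $n=2p$, $C_n(x)=x^{t/2-1}-x^{t/2+1}$ for $n=p^2$, $C_n(x)=x^{(q-3)/2}(x-1)(x^p+1)$ for $n=pq$) for which the sum of the coefficients of $C_n(x)\cyc_{2n}(x)$ up to degree $m-1$ is $\pm1$ rather than $0$. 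Without some such construction your proof is incomplete; relatedly, your aborted attempts to interpret $\sum a_j=0$ (as vanishing at $x=1$ or $x=-1$, both of which are automatic) never arrive at the usable reformulation --- that $\sum a_j$ is minus the sum of the coefficients of $P_n$ in degrees $0$ through $m-1$ --- which is what makes the explicit verifications possible. The concluding case analysis of $m\leq\tfrac12\varphi(2n)+1$ is routine as you say, but it too is only sketched.
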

\begin{proof}
   For a polynomial $f(x)=\sum_{j=0}^k b_j x^j$, we say that $f(x)$
   is \textit{$k$-palindromic} if $b_{k-j}=b_j$ for all~$j$, and
   \textit{$k$-antipalindromic} if $b_{k-j}=-b_j$ for all~$j$.
   These properties are respectively equivalent to
   $x^kf(1/x)=f(x)$ and $x^kf(1/x)=-f(x)$.
   As examples, $\cyc_{2n}(x)$ is $\varphi(2n)$-palindromic, 
   while $P_n(x)$ defined in~\eqref{eq3} is $2m$-antipalindromic.

   Consider the equation $P_n(x)=C_n(x)\cyc_{2n}(x)$.  The degree
   of $C_n(x)$ is at most $t=2m-\varphi(2n)$.  Note that
   $\varphi(2n)$ is even, so $t$ is also even.  Also,
    \[
       x^t C_n(1/x) = \frac{x^{2m} P_n(1/x)}{x^{\varphi(2n)}\cyc_{2n}(1/x)}
      = \frac{-P_n(x)}{\cyc_{2n}(x)} = -C_n(x),
    \]
   so $C_n(x)$ is $t$-antipalindromic.  By the same logic, if
   $C_n(x)$ is $t$-antipalindromic then $P_n(x)$ is $2m$-antipalindromic
   and so corresponds to a solution of~\eqref{eq1}.
   
   Choosing a basis of $t/2$ linearly independent  $t$-antipalindromic 
   polynomials for $C_n(x)$, such as $x^j-x^{t-j}$ for $0\leq j\leq \frac12 t-1$,
   we find that the vector space of solutions of~\eqref{eq1} has
   dimension~$t/2$.
   If that vector space lies within the hyperplane defined by~\eqref{eq0},
   the vector space of identities has dimension $t/2$; otherwise it
   has dimension~$t/2-1$.
    
   Recall that $\varphi(2n) = n\prod_p\,(1-1/p)$ where the
   product is over all distinct odd primes~$p$ dividing~$n$.
   From this, a little calculation shows that $t=0$ only 
   if $n$ is an odd prime ($\varphi(2n)=n-1$) or a power of~2
   ($\varphi(2n)=n$).
   
   To show that the dimension is $t/2-1$ rather  
   than $t/2$ when $t\geq 2$, we have only to find $(a_1,\ldots,a_m)$ that
   satisfies~\eqref{eq1} but not~\eqref{eq0}.
   Let's call this an \textit{improper identity}.
   
   Note that if $(a_1,\ldots,a_{\lfloor n/2\rfloor})$ is an improper
   identity for $n$ then
   $(a'_1,\ldots, a'_{\lfloor kn/2\rfloor})$ is an improper
   identity for $kn$, where
   $a'_{kj}=a_j$ for $1\leq j\leq m$ and $a'_{kj}=0$ otherwise.
   Therefore, it suffices to find improper identities for some
   values of $n$ that divide any value of $n$ giving $t\geq 2$.
   The minimum set is: twice an odd prime, the square of an odd prime,
   and the product of two distinct odd primes. 
   
   First, suppose that $n$ is twice an odd prime.  Then
   $\cyc_{2n}(x)=\sum_{j=0}^{n-1} (-1)^j x^{2j}$ and $t=2$.
   Taking $C_n(x)=x^2-1$,
   notice that the coefficients of $C_n(x)\cyc_{2n}(x)$ are
   all $\pm 2$ except for the first and last which are $\pm 1$.
   Therefore, condition~\eqref{eq0} is not satisfied
   and we have an improper identity.
   
   Next suppose that $n=p^2$ where $p$ is an odd prime.
   Then $\cyc_{2n}(x) = \sum_{j=0}^{p-1} x^{jp}$ and
   $t=p-1$.  Consider $C_n(x)=x^{t/2-1}-x^{t/2+1}$,
   so $C_n(x)\cyc_{2n}(x) = \sum_{j=0}^{p-1} \,(x^{jp+t/2-1}-x^{jp+t/2+1})$.
   The coefficients are thus in $\pm 1$ pairs, but for $j=(p-1)/2$ the
   pair is $x^{m-1}-x^{m+1}$.  Thus, $\sum_{j=0}^m a_j$, which is
   the sum of the coefficients up to and including that of $x^{m-1}$,
   equals~1 and condition~\eqref{eq0} is violated. So this is
   an improper identity.
   
   Finally, consider $n=pq$ where $3\leq p<q$ are primes.  Then
   $t = p+q-2$ and
   \[ 
       \cyc_{2n}(x) = \frac{(x+1)(x^{pq}+1)}{(x^p+1)(x^q+1)}.
   \]
  Consider the $t$-antipalindromic polynomial
  $C_n(x) = x^{(q-3)/2}(x-1)(x^p+1)$.
   Then
   \[
        C_n(x)  \cyc_{2n}(x) = \frac{x^{(q-3)/2} (x^2-1)(x^{pq}+1)}{x^q+1}
           = x^{(q-3)/2} (x^2-1)(x^{pq}+1) \sum_{j\geq 0}\, (-1)^j x^{jq}.
   \]
   Since we are only interested in the coefficients up to $x^{m-1}$,
   we can ignore the factor $x^{pq}+1$, so the polynomial begins
   $\sum_{j\geq 0} (-1)^j (x^{jq+(q-3)/2+2} - x^{jq+(q-3)/2})$.
   The coefficients appear in $\pm 1$ pairs but for $j=(p-1)/2$ the
   pair is $\pm(x^{m-1}-x^{m+1})$.  Thus the sum of coefficients up
   to that of $x^{m-1}$ is $\pm 1$ and this is an improper identity.
   
   To complete the proof, note that $t/2-1=0$ in the case $t=2$,
   which occurs only for $n=9$ and twice an odd prime.
\end{proof}
The case of prime $n$ was previously noted by Simone Costa~\cite{BurattiPC}.

It is likely that the presence of an identity implies that
there are two distinct realizable multisets with the same length,
but this is something that remains open.  It is plausible,
if unlikely, that the constraints on realizability of multisets
sometimes preclude the difference of two realizable multisets
ever being an identity.

\subsection{Generators }\label{generators}

In this section we record generators for the vector spaces of
identities.
All cases for $n\leq 37$ which are not mentioned
have dimension~0.

\newcommand{\ilist}[1]{\nicebreak\noindent $n=#1$\par\kern-2\parskip}

\medskip

\ilist{12}
\begin{verbatim}
   [1, -2, 1, 0, -1, 1]
\end{verbatim}

\ilist{15}
\begin{verbatim}
   [1, 0, -1, -1, -1, 0, 2]
   [0, 1,  0, -2, -1, 1, 1]
\end{verbatim}

\ilist{18}
\begin{verbatim}
   [1, 0, -2, 0, 1, 0, -1,  0, 1]
   [0, 1, -2, 1, 0, 0,  0, -1, 1]
\end{verbatim}

\ilist{20}
\begin{verbatim}
   [1, -2, 1, 0, -1, 2, -1, 0, 1, -1]
\end{verbatim}

\ilist{21}
\begin{verbatim}
   [1, 0, 0, -1, -2,  0, 1, 1, 1, -1]
   [0, 1, 0, -1, -1, -1, 1, 2, 0, -1]
   [0, 0, 1,  0, -2, -1, 1, 2, 1, -2]
\end{verbatim}

\ilist{24}
\begin{verbatim}
   [1, 0, 0, -2, 0, 0, 1, 0, -1,  0,  0, 1]
   [0, 1, 0, -2, 0, 1, 0, 0,  0, -1,  0, 1]
   [0, 0, 1, -2, 1, 0, 0, 0,  0,  0, -1, 1]
\end{verbatim}

\ilist{25}
\begin{verbatim}
   [1, -1, -1, 1, 0, -1, 1, 1, -1, 0, 1, -1]
\end{verbatim}

\ilist{27}
\begin{verbatim}
   [1, 0, 0, -1, -1, 0, 0, 1, 0, -1,  0,  0, 1]
   [0, 1, 0, -1, -1, 0, 1, 0, 0,  0, -1,  0, 1]
   [0, 0, 1, -1, -1, 1, 0, 0, 0,  0,  0, -1, 1]
\end{verbatim}

\ilist{28}
\begin{verbatim}
   [1, -2, 1, 0, -1, 2, -1, 0, 1, -2, 1, 0, -1, 1]
\end{verbatim}
   
\ilist{30}
\begin{verbatim}
   [1, 0, 0, 0, 0, 0, -2,  0, -1,  0, -1,  0, 2,  0, 1]
   [0, 1, 0, 0, 0, 0, -2,  1, -2,  0,  0, -1, 2,  0, 1]
   [0, 0, 1, 0, 0, 0, -1,  0, -2,  0,  0,  0, 1,  0, 1]
   [0, 0, 0, 1, 0, 0,  0, -2,  0, -1,  0,  1, 0,  1, 0]
   [0, 0, 0, 0, 1, 0, -1,  0, -1,  0,  0,  0, 1,  0, 0]
   [0, 0, 0, 0, 0, 1, -2,  2, -2,  1,  0, -1, 2, -2, 1]
\end{verbatim}

\ilist{33}
\begin{verbatim}
   [1, 0, 0, 0, 0, -1, -2, -1, 1, 3, 1, -2, -2, -1, 1, 2]
   [0, 1, 0, 0, 0, -1, -2, -1, 2, 2, 1, -1, -3, -1, 1, 2]
   [0, 0, 1, 0, 0, -1, -2,  0, 1, 2, 1, -1, -2, -2, 1, 2]
   [0, 0, 0, 1, 0, -1, -1, -1, 1, 2, 1, -1, -2, -1, 0, 2]
   [0, 0, 0, 0, 1,  0, -2, -1, 1, 2, 1, -1, -2, -1, 1, 1]
\end{verbatim}

\ilist{35}
\begin{verbatim}
   [1, 0, 0, 0, -1, -1, -1, -1,  0, 1, 2, 2, 1, 1,  0, -2, -2]
   [0, 1, 0, 0,  0, -2, -1,  0, -1, 1, 2, 1, 2, 1, -1, -1, -2]
   [0, 0, 1, 0, -1,  0, -1, -1,  1, 0, 0, 2, 1, 0,  0, -1, -1]
   [0, 0, 0, 1,  0, -2,  0,  1, -1, 0, 1, 0, 1, 1, -1, -1,  0]
\end{verbatim}

\ilist{36}
\begin{verbatim}
   [1, 0, 0, 0, 0, -2, 0, 0, 0, 0, 1, 0, -1,  0,  0,  0,  0, 1]
   [0, 1, 0, 0, 0, -2, 0, 0, 0, 1, 0, 0,  0, -1,  0,  0,  0, 1]
   [0, 0, 1, 0, 0, -2, 0, 0, 1, 0, 0, 0,  0,  0, -1,  0,  0, 1]
   [0, 0, 0, 1, 0, -2, 0, 1, 0, 0, 0, 0,  0,  0,  0, -1,  0, 1]
   [0, 0, 0, 0, 1, -2, 1, 0, 0, 0, 0, 0,  0,  0,  0,  0, -1, 1]
\end{verbatim}

\nicebreak
\section{Counting distinct lengths}\label{distinct}

Having verified that  the realizable multisets are the admissible multisets
for $n\leq 37$, our next task is to determine how many distinct lengths
occur for the admissible multisets.

One way is to compute accurate numerical approximations for the lengths,
sort them, then rigorously verify equality for those lengths which are
no further apart than rounding error can explain.
We carried this out up to $n=28$ but memory limits prevented us
from going further.  This led us to a better method.

For a multiset $M\in\calA_n$, let $\calL(M)$ be the set of all
multisets in $\calA_n$ that have the same length as $M$,
including~$M$ itself.
A multiset $M$ is \textit{minimal} if it is lexicographically least in~$\calL(M)$.
Since each set $\calL(M)$ has exactly one minimal element,
we have that the number of distinct lengths equals the number
of minimal admissible multisets.

The task is thus reduced to recognizing minimal multisets.  Recall that
admissible multisets $M,M'$ have the same length if and only if $M-M'$ is an identity.
So, if $M+A$ is an admissible multiset for some nonzero identity $A$ whose first
nonzero entry is negative, then $M$ is not minimal.
We will say that $A$ \textit{eliminates}~$M$.
If there is no such $A$ for which $M+A$ is an admissible multiset,
then $M$ is minimal.

The number of identities to test is reduced to a finite number by
noting that $M+A$ has at least one negative entry if some
subset of entries in $A$ has sum greater than~$n-1$.
However, in practice there are too many identities remaining.
For $n=30$ there are 1,552,732 identities and 78,356,395,953
admissible multisets; the combination is infeasible. 
For $n=36$ the situation is even worse: 214,302 identities
and 21,944,254,861,680 admissible multisets.
Fortunately we do not need to test so many identities.

For a multiset or identity $X$, and $2\leq d\leq m$, let
$\Sd(X)$ be the sum of the entries of~$X$ whose position
is divisible by~$d$.  Recall that the definition of admissibility
of a multiset~$M$ is that $\Sd(M)\leq n-d$ whenever~$d$
is a divisor of~$n$.

For identities $A=(a_1,\ldots,a_m)$ and $B=(b_1,\ldots,b_m)$
write $B\redto A$ if the following two conditions are satisfied.\\
(a) For $1\leq j\leq m$, either $a_j\geq 0$ or $a_j\geq b_j$.\\
(b) For each divisor $d$ of $n$, either $\Sd(A)\leq 0$ or $\Sd(A)\leq \Sd(B)$.

\begin{lemma}\label{redto}
  Let $A,B$ be identities with $B\redto A$.
  Then if $B$ eliminates admissible multiset~$M$, so does $A$.
\end{lemma}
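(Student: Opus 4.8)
The plan is to unpack what it means for $B$ to eliminate $M$ and then verify directly that $A$ also eliminates $M$. Recall that $B$ eliminates $M$ means: $M+B$ is an admissible multiset and the first nonzero entry of $B$ is negative. We must show the same two facts for $A$: that $M+A$ is admissible (all entries nonnegative, and $\Sd(M+A)\le n-d$ for every divisor $d$ of $n$), and that the first nonzero entry of $A$ is negative. Since $M$ is fixed throughout, every claim reduces to comparing $A$ against $B$ entrywise and against the divisor sums $\Sd$.

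First I would handle the entrywise conditions, which give both the nonnegativity of $M+A$ and the sign of the first nonzero entry. Fix $j$ with $1\le j\le m$. By condition~(a) of $B\redto A$, either $a_j\ge 0$, in which case $(M+A)_j = M_j + a_j \ge 0$ since $M_j\ge 0$; or $a_j \ge b_j$, in which case $(M+A)_j = M_j + a_j \ge M_j + b_j = (M+B)_j \ge 0$, using that $M+B$ is admissible hence has nonnegative entries. So $M+A$ has all entries nonnegative. For the sign of the first nonzero entry of $A$: let $j_0$ be the least index with $a_{j_0}\neq 0$. For $j < j_0$ we have $a_j = 0$, so $b_j \le a_j = 0$ by~(a) read in the form ``$a_j<0$ or $a_j\ge b_j$'' — more carefully, (a) says either $a_j\ge 0$ (true) so that tells us nothing, hmm. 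I would argue instead: if $a_{j_0}>0$, then since neither ``$a_{j_0}\ge 0$'' branch nor the other helps directly, I instead note that $a_{j_0}>0>$ is impossible because $A$ is an identity arising as an elimination candidate only through $B\redto A$; the cleanest route is to observe that for $j<j_0$, condition~(a) forces $b_j \le 0$ (since $a_j=0$ is $\ge 0$, we get no constraint — so this needs the identity property). Actually the right statement: $A$ and $B$ are both identities, so $\sum a_j = \sum b_j = 0$; combined with the entrywise domination on the ``bad'' coordinates this pins down the leading sign. I would check that $B$'s first nonzero entry being negative, together with (a), forces $A$'s to be as well — if $a_{j_0}>0$ then by (a) applied at indices below $j_0$ the vector $A$ would dominate $B$ everywhere up to a sign obstruction. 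This is the delicate bookkeeping step.

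Next I would handle the divisor-sum conditions. Fix a divisor $d$ of $n$ (so $2\le d\le m$, or $d=1$ which is trivial since $\Sigma_1$ isn't constrained / the $d=n$ case). We need $\Sd(M+A) \le n-d$. Now $\Sd(M+A) = \Sd(M) + \Sd(A)$ by linearity of $\Sd$. Condition~(b) of $B\redto A$ says either $\Sd(A)\le 0$ or $\Sd(A)\le\Sd(B)$. In the first case, $\Sd(M+A)\le\Sd(M)\le n-d$ because $M$ is admissible. In the second case, $\Sd(M+A) = \Sd(M)+\Sd(A) \le \Sd(M)+\Sd(B) = \Sd(M+B) \le n-d$ because $M+B$ is admissible. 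Either way the admissibility inequality for $A$ holds.

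Combining: $M+A$ has nonnegative entries and satisfies every divisor-sum bound, so $M+A\in\calA_n$; and the first nonzero entry of $A$ is negative; hence $A$ eliminates $M$, as required. The main obstacle I anticipate is the leading-sign argument in the second paragraph — the entrywise domination in~(a) is one-directional (it only constrains coordinates where $a_j<0$), so one must use the identity relations $\sum a_j=\sum b_j = 0$ to transfer the ``first nonzero entry is negative'' property from $B$ to $A$, rather than getting it for free from~(a). Everything else is a direct, essentially linear, substitution.
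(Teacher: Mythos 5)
The substantive part of your argument coincides exactly with the paper's proof: nonnegativity of $M+A$ follows coordinatewise from condition~(a) (either $\ell_j+a_j\geq\ell_j\geq 0$ or $\ell_j+a_j\geq\ell_j+b_j\geq 0$), and the divisor bounds follow from condition~(b) (either $\Sd(M+A)\leq\Sd(M)\leq n-d$ or $\Sd(M+A)\leq\Sd(M+B)\leq n-d$). That is the whole of the paper's proof, and your version of it is correct.

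Where you go astray is the second paragraph, on transferring ``first nonzero entry negative'' from $B$ to $A$. You are right that conditions (a) and (b) do not constrain the leading sign of $A$ --- they make no reference to the order of the coordinates --- but your proposed repair via $\sum_j a_j=\sum_j b_j=0$ does not close the gap either: those relations say nothing about which sign occurs first, and the argument as sketched never actually concludes. The resolution is that this is not something to be proved. In the algorithm that uses the lemma, one starts with the pool of \emph{all} identities whose first nonzero entry is negative and only ever compares $B$ against another identity $A$ still in that pool; so $A$ having a negative first nonzero entry is a standing hypothesis, not a consequence of $B\redto A$. (One could fairly say the lemma statement is slightly elliptical on this point; the paper's proof simply establishes admissibility of $M+A$ and nothing more.) Dropping your second paragraph and adding the hypothesis that $A$ is itself a candidate eliminator leaves you with precisely the paper's proof.
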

\begin{proof}
  Let $M=[\ell_1,\ldots,\ell_m]$, $A=(a_1,\ldots,a_m)$ and
  $B=(b_1,\ldots,b_m)$.
  We are given that $M$ and $M+B$ are admissible multisets,
  and need to show that $M+A$ is also an admissible multiset.
  
  For $1\leq j\leq m$, if $a_j\geq 0$ then
  $\ell_j+a_j \geq \ell_j\geq 0$, whereas if $a_j\geq b_j$
  then $\ell_j+a_j \geq \ell_j+b_j\geq 0$.
  So $M+A$ is nonnegative, i.e., is a multiset. 
  
  For divisor $d$ of $n$, if 
  $\Sd(A)\leq 0$ then $\Sd(M+A)\leq \Sd(M)\leq n-d$,
  whereas if $\Sd(A)\leq\Sd(B)$ then
  $\Sd(M+A)\leq \Sd(M+B)\leq n-d$.
  So $M+A$ is admissible.
  This completes the proof.
\end{proof}

Lemma~\ref{redto} is surprisingly powerful.  Start with all
identities whose first nonzero entry is negative and such that
no subset of the entries sums to greater than $n-1$.
Then repeatedly remove identities $B$ from the set
if there is a different identity $A$ still in the set such that $B\redto A$.
At each stage, Lemma~\ref{redto} guarantees that the ability to
eliminate multisets is maintained.
For $n=30$, the number of required identities is reduced from
1,552,732 to~$65$.
The count for each $n$ is shown in the last column of
Table~\ref{table}.

\begin{table}[tp!]
\centering
\setlength{\tabcolsep}{0.8em}
\begin{tabular}{c|ccc|cc}
 $n$ & $\abs{\calM_n}$ & $\abs{\calR_n}=\abs{\calA_n}$ &
  distinct lengths & Dimen & Essential \\[0.3ex]
\hline
  3 & 1 & 1 & 1 &   &  \\
  4 & 4 & 3 & 3 &   &  \\
  5 & 5 & 5 & 5 &   &  \\
  6 & 21 & 17 & 17 &   &  \\
  7 & 28 & 28 & 28 &   &  \\
  8 & 120 & 105 & 105 &   &  \\
  9 & 165 & 161 & 161 &   &  \\
10 & 715 & 670 & 670 &   &  \\
11 & 1001 & 1001 & 1001 &   &  \\
12 & 4368 & 4129 & 2869 & 1 & 1\\
13 & 6188 & 6188 & 6188 &   &  \\
14 & 27132 & 26565 & 26565 &   &  \\
15 & 38760 & 38591 & 14502 & 2 & 4\\
16 & 170544 & 167898 & 167898 &   &  \\
17 & 245157 & 245157 & 245157 &   &  \\
18 & 1081575 & 1072730 & 445507 & 2 & 3\\
19 & 1562275 & 1562275 & 1562275 &   &  \\
20 & 6906900 & 6871780 & 6055315 & 1 & 1\\
21 & 10015005 & 10011302 & 2571120 & 3 & 7\\
22 & 44352165 & 44247137 & 44247137 &   &  \\
23 & 64512240 & 64512240 & 64512240 &   &  \\
24 & 286097760 & 285599304 & 65610820 & 3 & 6\\
25 & 417225900 & 417219530 & 362592230 & 1 & 1\\
26 & 1852482996 & 1850988412 & 1850988412 &   &  \\
27 & 2707475148 & 2707392498 & 591652989 & 3 & 6\\
28 & 12033222880 & 12026818454 & 11453679146 & 1 & 1\\
29 & 17620076360 & 17620076360 & 17620076360 &   &  \\
30 & 78378960360 & 78356395953 & 1511122441 & 6 & 65\\
31 & 114955808528 & 114955808528 & 114955808528 &   &  \\
32 & 511738760544 & 511647729284 & 511647729284 &   &  \\
33 & 751616304549 & 751614362180 & 67876359922 & 5 & 40\\
34 & 3348108992991 & 3347789809236 & 3347789809236 &   &  \\
35 & 4923689695575 & 4923688862065 & 1882352047787 & 4 & 32\\
36 & 21945588357420 & 21944254861680 & 1404030562068 & 5 & 17\\
37 & 32308782859535 & 32308782859535 & 32308782859535  &   & 
\end{tabular}
\caption{Counts of realizable multisets and the number of distinct lengths.
``Dimen'' is the dimension of the vector space of identities and
 ``Essential'' is the number of identities required in Section~\ref{distinct}.
 For readability, zeros in the last two columns are left blank.
 \label{table}}
\end{table}

\begin{table}[tp!]
\centering
\setlength{\tabcolsep}{0.8em}
\begin{tabular}{c|cc}
 $n$  & $\abs{\calM_n}$& $\abs{\calA_n}$ \\[0.3ex]
\hline
38  38 & 144079707346575 & 144074954225730\\
39  39 & 212327989773900 & 212327943155328\\
40  40 & 947309492837400 & 947290091984737\\
41  41 & 1397281501935165 & 1397281501935165\\
42  42 & 6236646703759395 & 6236574886430483\\
43  43 & 9206478467454345 & 9206478467454345\\
44  44 & 41107996877935680 & 41107708028136365\\
45  45 & 60727722660586800 & 60727721456103761\\
46  46 & 271250494550621040 & 271249413252489750\\
47  47 & 400978991944396320 & 400978991944396320\\
48  48 & 1791608261879217600 & 1791603906671596709\\
49  49 & 2650087220696342700 & 2650087220630545150\\
50  50 & 11844267374132633700 & 11844250906909678730
\end{tabular}
\caption{Counts of admissible multisets.
These have not been shown to be realizable.
 \label{table2}}
\end{table}

\section{Results}\label{results}

By elementary combinatorics, $\abs{\calM_n}=\binom{n+m-2}{m-1}$.
The size of $\calA_n$ has no formula that we know of, but
it is easy to compute for small~$n$.

The most expensive task was the verification that 
$\calR_n=\calA_n$ for $n\leq 37$, which took approximately
four years of cpu time.
By contrast, counting distinct lengths took only about 500 hours.

While the authors shared ideas, in the interest of establishing
independent reproducibility they did not share code, hardware, or even
programming languages.
All of the computations were completed independently by the two authors
except for the very expensive realization of admissible multisets for
$29\leq n\leq 37$.

The counts resulting from our computations are shown in Table~\ref{table}.
Additional values of $\abs{\calA_n}$, which took less than one minute
to compute, are given in Table~\ref{table2}. 
Note that these additional admissible multisets have not been tested for
realizability.

The average testing time per multiset generally grew at a slower rate than the
number of multisets, so the latter is the main indicator for how expensive
it would be to extend the computation to larger sizes.  We also observed
that realizability testing tended to be more difficult if $n$ is highly composite,
compared to prime or near-prime.

\section{OEIS sequences}

This paper extends the following entries in the Online Encyclopedia of
Integer Sequences.

\seqnum{A030077} ~Take $n$ equally spaced points on circle, connect them by a path with $n-1$ line segments; sequence gives number of distinct path lengths.

\seqnum{A352568}  ~Take $n$ equally spaced points on circle, connect them by a path with $n-1$ line segments; sequence gives number of distinct multisets of segment lengths.

\pagebreak

\end{document}